\newtheorem{theorem}{Theorem}[section]
\newtheorem{conjecture}{Conjecture}
\newtheorem{corollary}{Corollary}
\newtheorem*{theorem*}{Theorem}
\newtheorem*{remark*}{Remark}
\newtheorem*{problem*}{Problem}
\newtheorem*{conjecture*}{Conjecture}
\newtheorem{lemma}[theorem]{Lemma}
\newcommand*{\QED}{\hfill\ensuremath{\square}}
\begin{document}
\title[Derivatives of $L$-series and generalized Stieltjes constants]{Special values of derivatives of $L$-series and generalized Stieltjes constants}

\author[M. Ram Murty and Siddhi Pathak]{M. Ram Murty\textsuperscript{1} and Siddhi Pathak}

\address{Department of Mathematics and Statistics, Queen's University, Kingston, Canada, ON K7L 3N6.}
\email{murty@mast.queensu.ca}
\address{Department of Mathematics and Statistics, Queen's University, Kingston, Canada, ON K7L 3N6.}
\email{siddhi@mast.queensu.ca}

\subjclass[2010]{11M41}

\keywords{derivative of $L$-series, generalized Stieltjes constants, Rohrlich's conjecture}

\footnotetext[1]{Research of the first author was supported by an NSERC Discovery grant.}

\begin{abstract}
The connection between derivatives of $L(s,f)$ for periodic arithmetical functions $f$ at $s=1$ and generalized Stieltjes constants has been noted earlier. In this paper, we utilize this link to throw light on the arithmetic nature of $L'(1,f)$ and certain Stieltjes constants. In particular, if $p$ is an odd prime greater than $7$, then we deduce the transcendence of at least $(p-7)/2$ of the generalized Stieltjes constants, $\{ \gamma_1(a,p) : 1 \leq a < p \}$, conditional on a conjecture of S. Gun, M. Ram Murty and P. Rath \cite{gmr}.

\end{abstract}

\maketitle
\begin{center}
{\sl  To Professor Robert Tijdeman on the occasion of his 75th birthday}
  \end{center}
\section{\bf Introduction}
\bigskip

The Riemann zeta function $\zeta(s)$ plays a crucial role in mathematics. The Laurent series expansion of $\zeta(s)$ around $s=1$ (see \cite{chowla}) can be written as
\begin{equation*}
    \zeta(s) = \frac{1}{s-1} + \gamma + \sum_{k=1}^{\infty} \frac{{(-1)}^k}{k!} \, \gamma_k \, {(s-1)}^k,
\end{equation*}
where
\begin{equation*}
    \gamma_k := \lim_{N \rightarrow \infty} \bigg\{ \sum_{n=1}^N \bigg( \frac{\log^k n}{n} \bigg) - \frac{\log^{(k+1)} N}{k+1} \bigg\}
\end{equation*}
are called Stieltjes constants and $\gamma$ is the well known Euler-Mascheroni constant. Even though these constants are important ingredients of the theory of the Riemann zeta function and appear in many contexts, it is unknown whether they are rational or irrational although they are expected to be transcendental. As a generalization of this question to arithmetic progressions, Knopfmacher \cite{knopf} defined 
\begin{equation*}
    \gamma_k(a,q) := \lim_{x \rightarrow \infty} \bigg\{ \sum_{\substack{n \leq x, \\ n \equiv a \bmod q}} \bigg( \frac{\log^k n}{n} \bigg) - \frac{\log^{(k+1)}x}{q(k+1)} \bigg\},
\end{equation*}
for natural numbers $a$ and $q$. The case $k=0$ was studied earlier by D. H. Lehmer \cite{lehmer} in 1975. We refer to these constants as \textit{generalized Stieltjes constants}.
\par
The motivation for studying these constants emanates from our desire
to understand special values of $L$-series.  More precisely,
when $f$ is an arithmetical function, with period $q$, the
Dirichlet series
$$L(s,f):= \sum_{n=1}^\infty {f(n) \over n^s}, $$
has been the focus of intense study (see for example the
survey article by Tijdeman \cite{tijdeman} as well as \cite{tmp}, \cite{gmr}
and \cite{ram-saradha}).  However, these papers studied the special
value $L(1,f)$ whenever it is defined.  Interestingly, this special
value can be studied using Baker's theory of linear forms in logarithms.
In this paper, our focus will be on the derivative $L'(1,f)$.
This problem has received scant attention.  For example, there is the
curious result of Murty and Murty \cite{murty-murty} which states
that if there is some squarefree $D>0$ and $\chi_D$ is the quadratic
character attached to ${\mathbb Q}(\sqrt{-D})$ is such that  $L'(1,\chi_D)=0$,
then, $e^\gamma$ is transcendental.  An analogous question of non-vanishing
seems to occur in other contexts as well (see for example, \cite{scourfield}).
These are not unrelated to the Euler-Kronecker constants
studied in \cite{ihara-murty} and \cite{mourtada-murty}.
\par

Many arithmetic properties and computational aspects of these constants have been studied in \cite{dilcher} but the only known result about their transcendental nature is a theorem due to M. Ram Murty and N. Saradha \cite[Theorem 1]{ram-saradha}, where they tackle the case $k=0$. In this paper, we concentrate on the arithmetic nature of these constants when $k=1$.

The nature of values of the gamma function at rational arguments and relations among them has been the subject of research for a long time. In light of this, a conjecture put forth by S. Gun, M. Ram Murty and P. Rath \cite{gmr} will be useful towards a partial solution to our question. The conjecture is the following.
\begin{conjecture}\label{conj}
For any positive integer $q > 2$, let $\overline{V_{\Gamma}(q)}$ be the $\overline{\mathbb{Q}}$-vector space spanned by the real numbers
\begin{equation*}
    \log\Gamma \bigg(\frac{a}{q}\bigg), \hspace{1mm} 1 \leq a \leq q, \hspace{1mm} (a,q) = 1.
\end{equation*}
Then the dimension of $\overline{V_{\Gamma}(q)}$ is $\phi(q)$.
\end{conjecture}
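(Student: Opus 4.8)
The plan is to replace the given generating set by an equivalent one that diagonalizes the action of $(\mathbb{Z}/q\mathbb{Z})^\times$. For each Dirichlet character $\chi$ modulo $q$ set $\Lambda_\chi := \sum_{a}\chi(a)\log\Gamma(a/q)$, the sum running over $1\le a\le q$ with $(a,q)=1$. Since the character table $[\chi(a)]$ of $(\mathbb{Z}/q\mathbb{Z})^\times$ is a square matrix whose entries are roots of unity and which is invertible over $\overline{\mathbb{Q}}$ (by orthogonality of characters), the vector $(\Lambda_\chi)_\chi$ is the image of $(\log\Gamma(a/q))_a$ under an element of $\mathrm{GL}_{\phi(q)}(\overline{\mathbb{Q}})$. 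Hence the two families have the same $\overline{\mathbb{Q}}$-span, and $\dim\overline{V_\Gamma(q)}=\phi(q)$ if and only if the $\phi(q)$ numbers $\Lambda_\chi$ are $\overline{\mathbb{Q}}$-linearly independent. I would carry out the rest of the argument in this diagonalized basis.

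The second step makes each $\Lambda_\chi$ explicit through Lerch's formula $\zeta'(0,x)=\log\Gamma(x)-\tfrac12\log(2\pi)$ together with the decomposition $L(s,\chi)=q^{-s}\sum_a\chi(a)\zeta(s,a/q)$. Differentiating at $s=0$ and using $\sum_a\chi(a)=0$ for $\chi\neq\chi_0$ yields $\Lambda_\chi=L'(0,\chi)+(\log q)\,L(0,\chi)$, where $L(0,\chi)$ is algebraic (a generalized Bernoulli number, and in fact $0$ for $\chi$ even and nontrivial); the trivial character is handled separately, where Gauss's multiplication formula expresses $\Lambda_{\chi_0}$ in terms of $\log(2\pi)$, $\log q$, and the logarithms of the primes dividing $q$. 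I would then invoke the functional equation of $L(s,\chi)$ to convert $L'(0,\chi)$ into the quantities governing the body of the paper: for primitive $\chi$ this rewrites $\Lambda_\chi$ in terms of $L'(1,\bar\chi)$, $L(1,\bar\chi)$, and the elementary constants $\gamma$, $\log\pi$, $\log q$. Even and odd characters split here, since for odd $\chi$ one has $L(1,\chi)\in\overline{\mathbb{Q}}\cdot\pi$, whereas for even nontrivial $\chi$ the value $L(1,\chi)$ is a $\overline{\mathbb{Q}}$-linear combination of logarithms of algebraic numbers (via $\log\sin(\pi a/q)$, equivalently $\log(1-\zeta_q^{a})$).

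With these reductions, the conjecture becomes the $\overline{\mathbb{Q}}$-linear independence of the resulting transcendental quantities: the derivatives $\{L'(1,\chi)\}$ together with $\pi$, the constants $\gamma,\log\pi$, and the logarithms $\{\log p:p\mid q\}\cup\{\log\sin(\pi a/q)\}$. The purely logarithmic contributions can be treated unconditionally: Baker's theorem furnishes $\overline{\mathbb{Q}}$-linear independence of logarithms of multiplicatively independent algebraic numbers, which disposes of the ``even, non-derivative'' block. The remaining task is to show that these logarithmic and $\pi$-periods are independent of, and among, the derivative terms $L'(1,\chi)$.

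The decisive obstacle — and the reason this statement is recorded here as a conjecture rather than a theorem — lies precisely in the derivative block. No present transcendence method reaches the numbers $L'(1,\chi)$: even the transcendence of the single values $\Gamma(a/q)$ is established only for special $q$ (notably $q\in\{3,4,6\}$, via the $\mathrm{CM}$ periods studied by Chudnovsky), and linear independence of several $\log\Gamma$ values lies well beyond this. A genuine proof would require either a Baker-type theory encompassing derivatives of $L$-functions at $s=1$ or the relevant instance of the Rohrlich–Lang period conjecture. I would therefore regard the plan above as a complete reduction of the conjecture to these deep independence statements, with the character-by-character analysis isolating exactly the even-derivative and odd-derivative components where all the difficulty is concentrated.
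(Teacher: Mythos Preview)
The paper does not prove this statement: it is explicitly labeled and treated as a \emph{conjecture} (due to Gun, Murty, and Rath), and every main result of the paper is stated \emph{conditionally} on it. There is therefore no ``paper's own proof'' to compare your proposal against.

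To your credit, you recognize this yourself in the final paragraph: the derivative block $\{L'(1,\chi)\}$ is precisely the obstruction, and no current transcendence machinery reaches it. What you have written is not a proof but a reformulation of the conjecture in the character basis, together with an honest assessment of where the difficulty concentrates. That reduction is correct and even illuminating, but note the near-circularity with the paper's direction of argument: the paper \emph{assumes} the $\overline{\mathbb{Q}}$-linear independence of the $\log\Gamma(a/q)$ in order to deduce transcendence properties of the $L'(1,f)$, while your reduction runs the implication backwards, showing that proving the conjecture would require (among other things) control over the very $L'(1,\chi)$ values the paper is trying to understand. So your analysis confirms that the conjecture and the paper's target results live at the same depth, which is consistent with the paper's decision to take the conjecture as a hypothesis rather than attempt a proof.
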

This conjecture was inspired by a conjecture of Rohrlich (see \cite{waldschmidt}) regarding the possible relations among the special values of the $\Gamma$-function.  
We note that the conjecture is equivalent to the numbers $\{ \log\Gamma(a/q) : 1 \leq a \leq q, (a,q) = 1\}$ being $\overline{\mathbb{Q}}$-linearly independent for $q>2$. This is a major unsolved problem in number theory and is believed to be outside the scope of current mathematical tools. 

For a natural number $q$, a function $f$ defined on the integers which is periodic with period $q$ is said to be {\sl odd} if $f(q-n) = - f(n)$ for all natural numbers $n$.
It is said to be {\sl even} if $f(q-n)=f(n)$ for all natural numbers $n$.
As noted earlier, the $L$-series attached to $f$ is defined as
\begin{equation*}
    L(s,f) := \sum_{n=1}^{\infty} \frac{f(n)}{n^s},
\end{equation*}
for $\Re(s)>1$. Using the theory of the Hurwitz zeta function, $L(s,f)$ can be extended to an entire function as long as $\sum_{a=1}^q f(a) = 0$. Given a function $f$ which is periodic mod $q$, we define the Fourier transform of $f$ as
\begin{equation}\label{fourier-def}
\hat{f}(b) := \frac{1}{q} \sum_{a=1}^q f(a) \zeta_q^{-ab}, 
\end{equation}
where $\zeta_q = e^{2 \pi i/q}$.
This can be inverted using the identity
\begin{equation*}\label{fourier-inversion}
f(n) = \sum_{b=1}^q \hat{f}(b) \zeta_q^{bn}.
\end{equation*}
Thus, the condition for convergence of $L(1,f)$, i.e, $\sum_{a=1}^q f(a) = 0$ can be interpreted as $\hat{f}(q) = 0$. An arithmetical function periodic with period $q$ is said to be of \textit{Dirichlet type} if 
\begin{equation*}
    f(n) = 0, \hspace{1mm} \text{whenever} \quad (n,q) > 1.
\end{equation*}
Another important notion is that of the linear independence of arithmetical functions. A set of arithmetical functions  $\{f_1,f_2, \cdots, f_m\}$ is said to be linearly independent over $\overline{\mathbb{Q}}$ if 
\begin{equation*}
    \sum_{j=1}^m \alpha_j f_j = 0, \text{ with } \alpha_j \in \overline{\mathbb{Q}} \Rightarrow \alpha_j = 0 \hspace{1mm} \text{ for all } 1 \leq j \leq m.
\end{equation*}

Again, using the theory of the Hurwitz zeta function, one can derive
formulas (as we will see below) for $L'(1,f)$ in terms of the Stieltjes
constants.

With this discussion in place, we state the main theorems of this paper.
\begin{theorem}\label{main-theorem-1}
Let $p$ be a prime greater than $7$. Define
\begin{equation*}
%    \begin{split}
        \mathfrak{F}_p := \bigg\{ f : \mathbb{Z} \rightarrow \overline{\mathbb{Q}} \hspace{1mm} | 
        \, \hspace{1mm} f \text{ is periodic with period } p, \, f \text{ is odd}, \, \widehat{f}(p) = 0, \, L(1,f) \neq 0 \bigg\}.
%    \end{split}
\end{equation*}
For $r > 2$, let $f_1, f_2, \cdots, f_r $ be $\overline{\mathbb{Q}}$-linearly independent elements of $\mathfrak{F}_p$. Then Conjecture \ref{conj} implies that at most three of the numbers
\begin{equation*}
    \bigg\{ L'(1,f_j) = - \sum_{a=1}^p f_j(a) \gamma_1(a,p) \hspace{1mm} | \hspace{1mm} 1 \leq j \leq r \bigg\}
\end{equation*}
are algebraic.
\end{theorem}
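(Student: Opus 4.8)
The plan is to make the connection between $L'(1,f)$ and the numbers $\log\Gamma(a/p)$ fully explicit, and then to run a short dimension count against Conjecture~\ref{conj}. The first and main step is a closed formula for $L'(1,f)$ when $f\in\mathfrak{F}_p$. Since $f$ is odd with $\widehat f(p)=0$ we have $\sum_{a=1}^p f(a)=0$, so $L(s,f)=p^{-s}\sum_{a=1}^{p-1}f(a)\,\zeta(s,a/p)$ is entire, and equivalently $L(s,f)=\sum_{b=1}^{p-1}\widehat f(b)\,\mathrm{Li}_s(\zeta_p^{b})$. Expanding around $s=1$ using the Laurent series of the Hurwitz zeta function (whose constant and linear coefficients are $-\psi(a/p)$ and $-\gamma_1(a/p)$), together with Hurwitz's functional equation for the periodic zeta function and Lerch's formula $\zeta'(0,x)=\log\Gamma(x)-\tfrac12\log 2\pi$, and using that $\widehat f$ is again odd so that every contribution even under $b\mapsto p-b$ (those involving $\log\sin(\pi b/p)$ and $\zeta''(0,b/p)$) cancels in pairs, I expect to reach a formula of the form
\[
L'(1,f)=-\sum_{a=1}^{p}f(a)\,\gamma_1(a,p)=(\gamma+\log 2\pi)\,L(1,f)-i\pi\sum_{b=1}^{p-1}\widehat f(b)\,\log\Gamma\!\Big(\tfrac bp\Big).
\]
I would also record the companion evaluation $L(1,f)=i\pi\sum_{b=1}^{p-1}\widehat f(b)\big(\tfrac12-\tfrac bp\big)=:i\pi\beta_f$ with $\beta_f\in\overline{\mathbb Q}$, so that the hypothesis $L(1,f)\ne0$ is just $\beta_f\ne0$; dividing the previous display by $i\pi$ then gives
\[
\frac{L'(1,f)}{i\pi}=\beta_f\,\gamma+\beta_f\,\log 2\pi-\sum_{b=1}^{p-1}\widehat f(b)\,\log\Gamma\!\Big(\tfrac bp\Big),
\]
an $\overline{\mathbb Q}$-linear combination of $\gamma$, $\log 2\pi$ and the $p-1$ numbers $\log\Gamma(b/p)$.

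Then I would argue by contradiction. Suppose four of $L'(1,f_1),\dots,L'(1,f_r)$ are algebraic; relabel so that $L'(1,f_j)\in\overline{\mathbb Q}$ for $j=1,2,3,4$. For each such $j$, $L'(1,f_j)/(i\pi)$ is an algebraic multiple of $1/(i\pi)$, so the formula above yields the $\overline{\mathbb Q}$-linear relation
\[
\beta_j\,\gamma+\beta_j\,\log 2\pi-\sum_{b=1}^{p-1}\widehat f_j(b)\,\log\Gamma\!\Big(\tfrac bp\Big)-\frac{L'(1,f_j)}{i\pi}=0,\qquad j=1,2,3,4,
\]
among $\gamma$, $\log 2\pi$, $1/(i\pi)$ and $\{\log\Gamma(b/p)\}_{b=1}^{p-1}$. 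Requiring that $\sum_{j=1}^4 c_j\cdot(\text{relation }j)$ have vanishing coefficient against each of the three numbers $\gamma$, $\log 2\pi$, $1/(i\pi)$ is at most three homogeneous linear conditions on $(c_1,\dots,c_4)\in\overline{\mathbb Q}^4$, hence has a solution with the $c_j$ not all zero; for such a choice the combination collapses to $\sum_{b=1}^{p-1}\big(\sum_{j=1}^4 c_j\widehat f_j(b)\big)\log\Gamma(b/p)=0$. By Conjecture~\ref{conj} the numbers $\log\Gamma(b/p)$, $1\le b\le p-1$, are $\overline{\mathbb Q}$-linearly independent, so $\sum_{j=1}^4 c_j\widehat f_j(b)=0$ for all $b$; since $f\mapsto\widehat f$ is $\overline{\mathbb Q}$-linear and invertible, this gives $\sum_{j=1}^4 c_j f_j=0$ with the $c_j$ not all zero, contradicting the assumed $\overline{\mathbb Q}$-linear independence of $f_1,\dots,f_4$. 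Hence at most three of the $L'(1,f_j)$ are algebraic.

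The hard part is the first step. Pinning down the closed formula requires care with the pole of $\Gamma(1-s)$ at $s=1$ and with the branch of the logarithm, isolating the coefficient of the $\log\Gamma$-sum as exactly $-i\pi$, and verifying that all contributions even in $b$ genuinely cancel because $f$ is odd — this cancellation is precisely what confines the method to odd $f$, and what leaves exactly the three transcendental quantities $\gamma$, $\log 2\pi$ and $1/\pi$, and no others, outside the span $\overline{V_\Gamma(p)}$. Once the formula is secured, the rest is elementary linear algebra over $\overline{\mathbb Q}$ together with a single application of Conjecture~\ref{conj}.
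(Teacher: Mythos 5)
Your overall strategy coincides with the paper's: write $L'(1,f)$ as $i\pi$ times an algebraic-coefficient combination of the numbers $\log\Gamma(b/p)$ plus an algebraic multiple of a fixed transcendental constant, eliminate everything lying outside $\overline{V_{\Gamma}(p)}$, and then invoke Conjecture \ref{conj} together with the invertibility of the Fourier transform to contradict the $\overline{\mathbb{Q}}$-linear independence of the $f_j$. Your endgame --- a null-space count applied simultaneously to the four relations --- is a clean repackaging of the paper's argument, which instead forms the determinants $d_{k,l}=B_{1,\widehat{f_l}}L'(1,f_k)-B_{1,\widehat{f_k}}L'(1,f_l)$ (designed to kill the constant term) and shows that no ratio $d_{k,l}/d_{u,w}$ can be algebraic.

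The one genuine problem is the closed formula you propose to prove. The correct identity (Lemma \ref{lemma-3} of the paper, obtained from the functional equation, Lerch's formula, and $L(1,f)=-\tfrac{i\pi}{p}B_{1,\widehat{f}}$) is
\[
L'(1,f)=\frac{i\pi}{p}\bigg\{\Big(\big(1+\tfrac{1}{p}\big)\log p-\log 2\pi-\gamma\Big)B_{1,\widehat{f}}+\sum_{b=1}^{p}\widehat{f}(b)\log\Gamma\big(\tfrac{b}{p}\big)\bigg\},
\qquad B_{1,\widehat{f}}=\sum_{b=1}^{p}b\,\widehat{f}(b).
\]
Your version omits the $\log p$ contributions (one from differentiating $p^{-s}$ in $L(s,f)=p^{-s}\sum_a f(a)\zeta(s,a/p)$, one from $(p/2\pi)^s$ in the functional equation), so the assertion that ``exactly the three transcendental quantities $\gamma$, $\log 2\pi$ and $1/\pi$, and no others'' occur is false: $\log p$ is a fourth. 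Taken literally, your count then becomes four homogeneous conditions on $(c_1,\dots,c_4)\in\overline{\mathbb{Q}}^4$, and the pigeonhole argument fails. The repair is immediate, and it is precisely the structural feature that the paper's $d_{k,l}$ exploits: $\gamma$, $\log 2\pi$ and $\log p$ all enter with coefficient equal to the \emph{same} algebraic number $B_{1,\widehat{f_j}}$ up to fixed rational factors, so annihilating all three is the single condition $\sum_j c_j B_{1,\widehat{f_j}}=0$; together with $\sum_j c_j L'(1,f_j)=0$ this is only two conditions. With that observation your argument goes through --- in fact it already produces a contradiction from three algebraic values and never uses $L(1,f_j)\neq 0$, so it proves somewhat more than the stated theorem. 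You should also correct the normalization of the $\log\Gamma$ coefficient ($+i\pi/p$ rather than $-i\pi$), though this is immaterial to the conclusion.
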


We also handle the case for arithmetical functions periodic with period $p$ and $L(1,f)=0$ in the theorem below.
\begin{theorem}\label{main-theorem-2}
Let $p$ be a prime number greater than 5. Define
\begin{equation*}
% \begin{split}
    \mathfrak{G}_p := \bigg\{ f: \mathbb{Z} \rightarrow \overline{\mathbb{Q}} \hspace{1mm} | \hspace{1mm} 
    f \text{ is periodic with period } p, \, f \text{ is odd}, \, \widehat{f}(p) = 0, \, L(1,f) = 0 \bigg\}.
% \end{split}
\end{equation*}
For $r \geq 2$, let $f_1, \cdots, f_r$ be $\overline{\mathbb{Q}}$-linearly independent elements of $\mathfrak{G}_p$. Then under Conjecture \ref{conj}, we conclude that at most one of the numbers 
\begin{equation*}
    \bigg\{ L'(1, f_j ) = - \sum_{a=1}^p f_j(a) \gamma_1(a,p) \hspace{1mm} | \hspace{1mm} 1 \leq j \leq r \bigg\}
\end{equation*}
is algebraic.
\end{theorem}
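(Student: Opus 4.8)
The plan is to turn $L'(1,f)$, for $f\in\mathfrak{G}_p$, into a $\overline{\mathbb{Q}}$-linear combination of the numbers $\log\Gamma(a/p)$ with $1\le a\le p-1$, and then to run this against Conjecture \ref{conj}. First I would record the functional equation of $L(s,f)$ for odd $f$ with $\widehat f(p)=0$ (so that both $L(s,f)$ and $L(s,\widehat f)$ are entire): applying Hurwitz's formula to $L(s,f)=p^{-s}\sum_{a=1}^{p}f(a)\zeta(s,a/p)$ and using that $\widehat f$ is again odd, one obtains
\begin{equation*}
L(s,f)=2i\,p^{1-s}(2\pi)^{s-1}\,\Gamma(1-s)\cos\!\Big(\tfrac{\pi s}{2}\Big)\,L(1-s,\widehat f)=:\Phi(s)\,L(1-s,\widehat f),\qquad \Phi(1)=i\pi .
\end{equation*}
Evaluating at $s=1$ gives $L(1,f)=i\pi\,L(0,\widehat f)$, so on $\mathfrak{G}_p$ we have $L(0,\widehat f)=0$; differentiating at $s=1$ gives $L'(1,f)=\Phi'(1)L(0,\widehat f)-i\pi\,L'(0,\widehat f)$, whose first term now drops. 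Finally, from $L(u,\widehat f)=p^{-u}\sum_a\widehat f(a)\zeta(u,a/p)$, the vanishing of $L(0,\widehat f)$, the identity $\sum_a\widehat f(a)=0$ (valid since $f$ is odd), and Lerch's formula $\zeta'(0,x)=\log\big(\Gamma(x)/\sqrt{2\pi}\big)$, one gets $L'(0,\widehat f)=\sum_{a=1}^{p-1}\widehat f(a)\log\Gamma(a/p)$, hence
\begin{equation*}
L'(1,f)=-\,i\pi\sum_{a=1}^{p-1}\widehat f(a)\,\log\Gamma\!\Big(\frac ap\Big),\qquad f\in\mathfrak{G}_p .
\end{equation*}
(The same identity can be extracted from the formula for $L'(1,f)$ in terms of generalized Stieltjes constants together with the Hurwitz-type evaluation of $\gamma_1$ at rational arguments.) Because $p$ is prime, every $a/p$ occurring here has $(a,p)=1$, so all of $\log\Gamma(1/p),\dots,\log\Gamma((p-1)/p)$ are among the generators of $\overline{V_{\Gamma}(p)}$; since they are $\phi(p)=p-1$ in number, Conjecture \ref{conj} is precisely the assertion that they are $\overline{\mathbb{Q}}$-linearly independent.

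Next I would package the hypothesis on the $f_j$. The Fourier transform is a $\overline{\mathbb{Q}}$-linear bijection on the functions periodic mod $p$, and $\widehat f(p)=0$ for odd $f$, so $f_j\mapsto v_j:=\big(\widehat{f_j}(1),\dots,\widehat{f_j}(p-1)\big)\in\overline{\mathbb{Q}}^{\,p-1}$ is $\overline{\mathbb{Q}}$-linear and injective; hence $v_1,\dots,v_r$ are $\overline{\mathbb{Q}}$-linearly independent, and in particular each $v_j\neq0$. Writing $\alpha_j:=L'(1,f_j)$ and $\mathbf L:=(\log\Gamma(1/p),\dots,\log\Gamma((p-1)/p))$, the displayed formula reads $\alpha_j=-i\pi\,(v_j\cdot\mathbf L)$, while the content of Conjecture \ref{conj} is that $c\cdot\mathbf L=0$ with $c\in\overline{\mathbb{Q}}^{\,p-1}$ forces $c=0$.

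The theorem then follows by a short argument. Suppose $\alpha_j$ and $\alpha_k$ are both algebraic for some $j\neq k$. Then $\alpha_k v_j-\alpha_j v_k$ has entries in $\overline{\mathbb{Q}}$, and
\begin{equation*}
(\alpha_k v_j-\alpha_j v_k)\cdot\mathbf L=\alpha_k\,(v_j\cdot\mathbf L)-\alpha_j\,(v_k\cdot\mathbf L)=\alpha_k\cdot\frac{\alpha_j}{-i\pi}-\alpha_j\cdot\frac{\alpha_k}{-i\pi}=0 ,
\end{equation*}
so Conjecture \ref{conj} forces $\alpha_k v_j=\alpha_j v_k$. If $\alpha_k\neq0$ this makes $v_j$ a $\overline{\mathbb{Q}}$-multiple of $v_k$, contradicting their linear independence; hence $\alpha_k=0$, and by symmetry $\alpha_j=0$, so $v_j\cdot\mathbf L=0$ with $v_j\neq0$, once more contradicting Conjecture \ref{conj}. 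Therefore at most one of $\alpha_1,\dots,\alpha_r$ is algebraic, which is Theorem \ref{main-theorem-2}.

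The one genuinely arithmetic step is the first: one must verify that imposing $L(1,f)=0$ strips $L'(1,f)$ of every transcendental constituent except the $\log\Gamma(a/p)$'s --- concretely, that the remaining terms ($\gamma$, $\log 2\pi$, and the transcendence carried by $L(1,f)$ itself) enter $L'(1,f)$ only through the factor $L(1,f)=i\pi L(0,\widehat f)$, which vanishes on $\mathfrak{G}_p$. For $f\in\mathfrak{F}_p$ those terms survive, and a Baker-type analysis of the three extra transcendentals they contribute is what weakens the conclusion to ``at most three'' in Theorem \ref{main-theorem-1}. Everything else above --- the Fourier bijection and the one-line computation feeding Conjecture \ref{conj} --- is soft.
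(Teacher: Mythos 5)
Your proposal is correct and takes essentially the same route as the paper: reduce $L'(1,f_j)$, using the functional equation and the vanishing of $L(0,\widehat{f_j})$ (equivalently $B_{1,\widehat{f_j}}=\sum_a a\widehat{f_j}(a)=0$), to a fixed nonzero constant times $\sum_{a=1}^{p-1}\widehat{f_j}(a)\log\Gamma(a/p)$, then feed any putative algebraic relation into Conjecture \ref{conj} and contradict the $\overline{\mathbb{Q}}$-linear independence of the $\widehat{f_j}$ via the Fourier bijection. Two minor points of comparison: your overall constant $-i\pi$ differs from the $i\pi/q$ appearing in the paper's Lemma \ref{lemma-3} (the discrepancy is immaterial since only the ratio of the sums enters the argument), and your endgame with $\alpha_k v_j-\alpha_j v_k$ is slightly tidier than the paper's quotient $L'(1,f_k)/L'(1,f_l)$, as it explicitly disposes of the case where one of the two values vanishes.
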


\begin{remark*}
Both the above theorems also hold when the period of the functions under consideration is a composite number $q$, provided that the Fourier transforms of the functions are of Dirichlet type. This restriction comes from the nature of Conjecture \ref{conj}. Indeed, we will prove that Theorem \ref{main-theorem-1} holds for the general set of functions
\begin{equation*}
    \begin{split}
        \mathfrak{F}_q := \bigg\{ f: \mathbb{Z} \rightarrow \overline{\mathbb{Q}} \hspace{1mm} | & \hspace{1mm} 
        f \text{ is periodic with period } q, \, f \text{ is odd}, \\
        & \widehat{f} \text{ is of Dirichlet type}, \, L(1,f) \neq 0 \bigg\},
    \end{split}
\end{equation*}
and Theorem \ref{main-theorem-2} holds for
\begin{equation*}
    \begin{split}
        \mathfrak{G}_q := \bigg\{ f: \mathbb{Z} \rightarrow \overline{\mathbb{Q}} \hspace{1mm} | & \hspace{1mm} 
        f \text{ is periodic with period } q, \, f \text{ is odd}, \\
        & \widehat{f} \text{ is of Dirichlet type}, \, L(1,f) = 0 \bigg\},
    \end{split}
\end{equation*}
where $q$ is not necessarily prime.
\end{remark*}

We note that the above defined set $\mathfrak{F}_q$ is non-empty since odd primitive Dirichlet characters modulo $q$ are in $\mathfrak{F}_q$. We will see this in the course of proof of the following corollary.
\begin{corollary}\label{coro-1}
Let $q$ be a natural number greater than $7$. Then assuming Conjecture \ref{conj}, we deduce that at most three of the following numbers are algebraic:
\begin{equation*}
    \bigg\{ L'(1,\chi) = - \sum_{a=1}^q \chi(a) \gamma_1(a,q) \hspace{1mm}| \hspace{1mm} \chi \text{ is an odd primitive Dirichlet character mod } q \bigg\}.
\end{equation*}
\end{corollary}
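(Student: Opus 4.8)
The plan is to deduce the corollary directly from Theorem \ref{main-theorem-1} by exhibiting a large enough family of $\overline{\mathbb{Q}}$-linearly independent functions in $\mathfrak{F}_q$. First I would recall that an odd primitive Dirichlet character $\chi$ modulo $q$ is periodic with period $q$, satisfies $\chi(q-n) = \chi(-1)\chi(n) = -\chi(n)$ (so it is odd), and has $\widehat{\chi}$ supported on residues coprime to $q$ (so $\widehat{\chi}$ is of Dirichlet type and in particular $\widehat{\chi}(q) = 0$). The remaining membership condition for $\mathfrak{F}_q$ is $L(1,\chi) \neq 0$; this is the classical non-vanishing of Dirichlet $L$-functions at $s=1$ for non-principal characters, which holds unconditionally. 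Hence every odd primitive character mod $q$ lies in $\mathfrak{F}_q$, and this also justifies the remark that $\mathfrak{F}_q$ is non-empty.

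Next I would verify that distinct Dirichlet characters modulo $q$ are linearly independent over $\overline{\mathbb{Q}}$ as arithmetical functions. This follows from the orthogonality relations: if $\sum_j \alpha_j \chi_j = 0$ as a function on $\mathbb{Z}$, then restricting to $n$ coprime to $q$ and pairing against a fixed $\chi_k$ via $\sum_{n \bmod q} (\cdot) \overline{\chi_k(n)}$ yields $\phi(q) \alpha_k = 0$, forcing all $\alpha_k = 0$. Therefore the full set of odd primitive characters mod $q$ — of which there are some number $r$ depending on $q$ — is a $\overline{\mathbb{Q}}$-linearly independent subset of $\mathfrak{F}_q$.

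Now I would split into cases according to $r$. If $r \leq 3$ the statement is vacuous, since at most three numbers are even under discussion. If $r > 2$ (equivalently $r \geq 3$, and the interesting case is $r \geq 4$), apply the general form of Theorem \ref{main-theorem-1} stated in the remark — valid because each $\widehat{\chi_j}$ is of Dirichlet type — with $f_1, \dots, f_r$ taken to be the odd primitive characters mod $q$. Theorem \ref{main-theorem-1} then gives, under Conjecture \ref{conj}, that at most three of the numbers $L'(1,\chi_j) = -\sum_{a=1}^q \chi_j(a)\gamma_1(a,q)$ are algebraic, which is exactly the assertion of the corollary. (The identity $L'(1,\chi) = -\sum_{a=1}^q \chi(a)\gamma_1(a,q)$ is the formula recorded in the statement of Theorem \ref{main-theorem-1}, applied to $f = \chi$.)

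The only genuine points requiring care, rather than the main obstacle, are the two membership facts: that $L(1,\chi) \neq 0$ for odd (non-principal) $\chi$, and that odd primitive characters are linearly independent; both are standard. The substantive content is entirely contained in Theorem \ref{main-theorem-1} and Conjecture \ref{conj}, so there is no serious obstacle at the level of the corollary itself — it is a clean specialization, with the $q > 7$ hypothesis inherited verbatim from the theorem.
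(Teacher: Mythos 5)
Your proposal is correct and follows essentially the same route as the paper: verify that each odd primitive character mod $q$ lies in $\mathfrak{F}_q$ (oddness from $\chi(-1)=-1$, Dirichlet type of $\hat{\chi}$ from the separability of Gauss sums for primitive characters, and $L(1,\chi)\neq 0$ from Dirichlet's theorem), then invoke Theorem \ref{main-theorem-1}. The only difference is that you spell out the $\overline{\mathbb{Q}}$-linear independence of distinct characters via orthogonality, which the paper leaves implicit.
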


Applying Theorem \ref{main-theorem-1} to the scenario when $q = p$, an odd prime greater than $7$ and
\begin{equation*}
    f_j(n) :=
    \begin{cases}
    1 & \text{ if } n \equiv j \bmod p, \\
    -1 & \text{ if } n \equiv -j \bmod p, \\
    0 & \text{ otherwise,}
    \end{cases}
\end{equation*}
for $1 \leq j \leq (p-1)/2$, we infer the following:
\begin{corollary}\label{coro-2}
For an odd prime $p$ greater than $7$, Conjecture \ref{conj} implies that at least $(p-7)/2$ of the numbers
\begin{equation*}
    \big\{ \gamma_1(a,p) \hspace{1mm} | \hspace{1mm} 1 \leq a \leq p-1 \big\}
\end{equation*}
are transcendental.
\end{corollary}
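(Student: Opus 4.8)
The plan is to derive Corollary \ref{coro-2} as a direct consequence of Theorem \ref{main-theorem-1} applied to the explicitly chosen odd functions $f_j$. First I would verify that each $f_j$, for $1 \leq j \leq (p-1)/2$, lies in $\mathfrak{F}_p$: each is visibly periodic mod $p$ and odd (since $f_j(p-n) = -f_j(n)$ by construction), and the condition $\widehat{f_j}(p) = 0$ amounts to $\sum_{a=1}^p f_j(a) = 1 + (-1) = 0$, which holds. The remaining membership condition, $L(1,f_j) \neq 0$, requires a short argument: one expresses $L(1,f_j)$ via the classical formula for $L$-values of odd periodic functions (e.g. in terms of $\cot$ or of $\log\Gamma$ values, or simply $L(1,f_j) = \sum_{n \equiv \pm j} (\pm 1)/n$) and shows it is a nonzero $\overline{\mathbb{Q}}$-linear combination — in fact a $\pi$-multiple of an algebraic number that can be checked to be nonzero, for instance because $L(1,f_j) = (\pi/p)\cot(\pi j/p)$ or a similar nonvanishing trigonometric expression.

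Next I would check $\overline{\mathbb{Q}}$-linear independence of $\{f_1, \dots, f_{(p-1)/2}\}$ as functions: any linear relation $\sum_j \alpha_j f_j \equiv 0$, when evaluated at $n \equiv j \bmod p$ for each $j$ in the range, forces $\alpha_j = 0$, since the supports of the $f_j$ on residues $1, \dots, (p-1)/2$ are disjoint singletons. So we have $r = (p-1)/2$ linearly independent elements of $\mathfrak{F}_p$, and Theorem \ref{main-theorem-1} (which needs $r > 2$, i.e. $p > 7$, exactly the hypothesis) tells us that at most three of the numbers $L'(1,f_j) = -\sum_{a=1}^p f_j(a)\gamma_1(a,p) = \gamma_1(p-j,p) - \gamma_1(j,p)$ are algebraic, under Conjecture \ref{conj}. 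Hence at least $(p-1)/2 - 3 = (p-7)/2$ of these differences $\gamma_1(p-j,p) - \gamma_1(j,p)$ are transcendental.

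Finally I would convert "at least $(p-7)/2$ transcendental differences $\gamma_1(p-j,p) - \gamma_1(j,p)$" into "at least $(p-7)/2$ transcendental values among $\{\gamma_1(a,p) : 1 \leq a \leq p-1\}$". The point is that if $\gamma_1(p-j,p) - \gamma_1(j,p)$ is transcendental, then at least one of $\gamma_1(j,p)$, $\gamma_1(p-j,p)$ is transcendental (a sum or difference of two algebraic numbers is algebraic). Since the index pairs $\{j, p-j\}$ for $1 \leq j \leq (p-1)/2$ are pairwise disjoint, each transcendental difference contributes a \emph{distinct} transcendental value from the list $\{\gamma_1(a,p)\}$; thus we obtain at least $(p-7)/2$ transcendental members of that set, which is the claim.

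The main obstacle — really the only nontrivial point, since everything else is bookkeeping — is establishing $L(1,f_j) \neq 0$ so that $f_j \in \mathfrak{F}_p$. I would handle this by writing $f_j = \tfrac{1}{2}(e_j - e_{-j})$ in the natural basis and using the standard closed form for $L(1,\cdot)$ of an odd function supported on a single pair of residues, obtaining a nonzero rational multiple of $\pi\cot(\pi j / p)$ (equivalently $\pi$ times a nonzero algebraic number), which never vanishes for $1 \leq j \leq (p-1)/2$; alternatively one can cite the known linear independence of the relevant $L(1,\cdot)$ values or invoke Baker's theorem as in the references \cite{tmp}, \cite{gmr}. With this in hand the corollary follows immediately from Theorem \ref{main-theorem-1}.
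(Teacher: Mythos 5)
Your proposal is correct and follows essentially the same route as the paper: apply Theorem \ref{main-theorem-1} to the functions $f_j$ supported on the residue pairs $\{j,-j\}$, and then pass from transcendental differences $\gamma_1(j,p)-\gamma_1(p-j,p)$ to transcendental individual constants using the disjointness of the pairs. The only (harmless) deviation is in verifying $L(1,f_j)\neq 0$: the paper cites the Baker--Birch--Wirsing theorem (Theorem \ref{bbw-thm}), whereas you compute $L(1,f_j)=(\pi/p)\cot(\pi j/p)\neq 0$ directly, which is equally valid and in fact more elementary for these particular functions.
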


\section{\bf Preliminaries}
\bigskip

The aim of this section is to introduce notation and some fundamental results that will be used in the later part of the paper. Let $q$ be a fixed positive integer. Consider $f : \mathbb{N} \to \overline{\mathbb{Q}}$, periodic with period $q$. Define
\begin{equation*}
L(s,f) = \sum_{n=1}^{\infty} \frac{f(n)}{n^s}.
\end{equation*}
Observe that $L(s,f)$ converges absolutely for $\Re(s) > 1$. Since $f$ is periodic,
\begin{align}
L(s,f) & = \sum_{a=1}^q f(a)\sum_{k=0}^{\infty} \frac{1}{{(a + kq)}^s} \nonumber \\
& = \frac{1}{q^s} \sum_{a=1}^q f(a) \zeta(s, a/q), \label{l-zeta}
\end{align}
where $\zeta(s,x)$ is the Hurwitz zeta function. For $\Re(s) > 1$ and $0 < x \leq 1$, the Hurwitz zeta function is defined as
\begin{equation*}
\zeta(s,x) = \sum_{n=0}^{\infty} \frac{1}{{(n+x)}^s}.
\end{equation*}
In 1882, Hurwitz \cite[Chapter 12, Section 5]{apostol} proved that $\zeta(s,x)$ has an analytic continuation to the entire complex plane except for a simple pole at $s=1$ with residue $1$. Using this, we  conclude that $L(s,f)$ can be extended analytically to the entire complex plane except for a simple pole at $ s=1 $ with residue $ \frac {1} {q} \sum_{a=1}^q f(a)$. Thus,
it is easy to deduce that
$\sum_{n=1}^\infty \frac {f(n)} {n}$ converges whenever  $\sum_{a=1}^q f(a) = 0$. Hence, $f(q) = \hat{f}(q) = 0$ implies that both $L(s,f)$ as well as $L(s, \hat{f})$ are entire.\\

Before proceeding, we prove a few lemmas for arithmetical functions periodic with period $q$.

\begin{lemma}\label{lemma-1}
Let $f$ be an arithmetical function periodic with period $q$. Then,
\begin{equation*}
    L(1-s,f) = 2 \, \Gamma(s) {\bigg(\frac{q}{2 \pi}\bigg)}^s \cos\bigg( \frac{s\pi}{2}\bigg) L(s,\hat{f}),
\end{equation*}
when $f$ is even (i.e., f(q-n) = f(n) for all $n$) and
\begin{equation*}
    L(1-s,f) = 2\,i \, \Gamma(s) {\bigg(\frac{q}{2 \pi}\bigg)}^s \sin\bigg( \frac{s\pi}{2}\bigg) L(s,\hat{f}),
\end{equation*}
when $f$ is odd (i.e, f(q-n) = -f(n) for all $n$).
\end{lemma}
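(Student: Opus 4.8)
The plan is to reduce everything to the classical functional equation for the Hurwitz zeta function, or equivalently to the Hurwitz formula for $\zeta(s,x)$, via the two decompositions of $L(s,f)$ in terms of Hurwitz zeta values. First I would recall the Hurwitz formula (see Apostol, Chapter 12): for $0 < x \leq 1$ and $\Re(s) > 1$,
\begin{equation*}
    \zeta(1-s, x) = \frac{\Gamma(s)}{{(2\pi)}^s} \left( e^{-\pi i s/2} \sum_{n=1}^{\infty} \frac{e^{2\pi i n x}}{n^s} + e^{\pi i s/2} \sum_{n=1}^{\infty} \frac{e^{-2\pi i n x}}{n^s} \right),
\end{equation*}
which continues to all $s$. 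Substituting $x = a/q$ into this and plugging into the representation $L(1-s,f) = q^{s-1} \sum_{a=1}^{q} f(a)\, \zeta(1-s, a/q)$ coming from \eqref{l-zeta} (with $s$ replaced by $1-s$), I would interchange the finite sum over $a$ with the series over $n$.

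The key step is then to recognize the inner sum $\sum_{a=1}^{q} f(a)\, e^{\pm 2\pi i n a/q} = \sum_{a=1}^{q} f(a)\, \zeta_q^{\pm na}$ in terms of the Fourier transform. By the definition \eqref{fourier-def}, $\sum_{a=1}^{q} f(a) \zeta_q^{-na} = q\, \widehat{f}(n)$, and since $\widehat{f}$ is itself periodic mod $q$ this lets me reassemble $\sum_{n=1}^\infty \widehat{f}(n) n^{-s}$, i.e. $L(s,\widehat{f})$, together with its companion $\sum_n \widehat{f}(-n) n^{-s}$. At this point the parity hypothesis enters: if $f(q-n) = f(n)$ then $\widehat{f}$ is even, so the two exponential sums coincide and the bracket collapses to $2\cos(\pi s/2)\, L(s,\widehat f)$ after combining $e^{-\pi i s/2} + e^{\pi i s/2}$; if $f(q-n) = -f(n)$ then $\widehat{f}$ is odd, the two pieces differ by a sign, and $e^{-\pi i s/2} - e^{\pi i s/2} = -2i\sin(\pi s/2)$ produces the stated sine factor (one should double-check the placement of $i$ and the overall sign here, adjusting the labeling $n \leftrightarrow -n$ as needed). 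Collecting the powers of $q$ and $2\pi$ — namely $q^{s-1} \cdot q \cdot (2\pi)^{-s} = (q/2\pi)^s / (2\pi) \cdot q \cdot \ldots$, which I would track carefully — yields the factor $2\Gamma(s)(q/2\pi)^s$ in front.

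The main obstacle is bookkeeping rather than conceptual: keeping the exponentials, the $i$'s, the signs from the parity of $\widehat f$, and the powers of $q$ and $2\pi$ all consistent, and making sure the interchange of the (finite) sum and the series is legitimate in the region of absolute convergence before continuing to all $s$ by analyticity. One subtlety worth noting is the treatment of $a = q$ versus $a = 0$ as the Hurwitz parameter; since $\zeta(s, 1) = \zeta(s)$ and $f(q) = f(0)$ by periodicity, this causes no trouble, but it should be mentioned. No hypothesis on $\widehat f(q)$ or on $L(1,f)$ is needed here — the identity is a formal consequence of the Hurwitz formula and holds for all periodic $f$ — so the lemma is really just the ``explicit'' form of the Hurwitz functional equation packaged through the Fourier transform.
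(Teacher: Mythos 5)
Your proposal is correct, and it actually supplies a derivation where the paper gives none: the authors simply cite Lang (Chapter XIV, Theorem 2.1) for this functional equation. Your route through the Hurwitz formula $\zeta(1-s,x) = \Gamma(s)(2\pi)^{-s}\bigl(e^{-\pi i s/2}F(x,s)+e^{\pi i s/2}F(-x,s)\bigr)$ combined with \eqref{l-zeta} is the standard argument behind the cited result, and the bookkeeping you flagged does close up: writing $L(1-s,f)=q^{s-1}\sum_a f(a)\zeta(1-s,a/q)$ and using $\sum_a f(a)\zeta_q^{-na}=q\hat f(n)$, $\sum_a f(a)\zeta_q^{na}=q\hat f(-n)$ gives the prefactor $q^{s-1}\cdot q\cdot(2\pi)^{-s}=(q/2\pi)^s$, and since $f$ even (resp.\ odd) forces $\hat f(-n)=\hat f(n)$ (resp.\ $-\hat f(n)$), the bracket collapses to $(e^{\pi i s/2}+e^{-\pi i s/2})L(s,\hat f)=2\cos(\pi s/2)L(s,\hat f)$ in the even case and to $(e^{\pi i s/2}-e^{-\pi i s/2})L(s,\hat f)=2i\sin(\pi s/2)L(s,\hat f)$ in the odd case, exactly matching the statement. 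Your remarks about the $a=q$ endpoint and about the identity holding as meromorphic functions without any hypothesis on $\hat f(q)$ are both accurate. The one thing to tighten in a written version is to commit to the sign convention rather than leave it as ``adjust as needed'': the minus sign from $\hat f(-n)=-\hat f(n)$ attaches to the $e^{-\pi i s/2}$ term, which is what produces $+2i\sin(\pi s/2)$ rather than $-2i\sin(\pi s/2)$.
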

\begin{proof}
We refer the reader to \cite[Chapter XIV, Theorem 2.1]{lang}.
\end{proof}

In analogy with the notation of generalized Bernoulli numbers associated to Dirichlet characters, we define
\begin{equation*}
    B_{1,f} := \sum_{a=1}^q a \, f(a),
\end{equation*}
where $f$ is an odd arithmetical function periodic with period $q$. We make another important observation.
\begin{lemma}\label{lemma-2}
For any arithmetical function $f$ periodic with period $q$,
\begin{equation*}
    L'(0,f) = \frac{\log q}{q} \, B_{1,f} + \sum_{b=1}^q f(b) \log\Gamma \bigg( \frac{b}{q} \bigg).
\end{equation*}
\end{lemma}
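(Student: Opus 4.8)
The goal is to compute $L'(0,f)$ by differentiating the Hurwitz-zeta decomposition \eqref{l-zeta} and invoking the classical special values of $\zeta(s,x)$ and its derivative at $s=0$. The plan is as follows. First I would start from
\[
L(s,f) = \frac{1}{q^s} \sum_{a=1}^q f(a)\, \zeta(s,a/q),
\]
and differentiate both sides with respect to $s$. Writing $q^{-s} = e^{-s\log q}$, the product rule gives
\[
L'(s,f) = -\log q \cdot \frac{1}{q^s}\sum_{a=1}^q f(a)\,\zeta(s,a/q) \;+\; \frac{1}{q^s}\sum_{a=1}^q f(a)\,\zeta'(s,a/q),
\]
where $\zeta'(s,x)$ denotes the derivative in the first variable. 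Evaluating at $s=0$ reduces everything to the two classical identities $\zeta(0,x) = \tfrac12 - x$ and Lerch's formula $\zeta'(0,x) = \log\Gamma(x) - \tfrac12\log(2\pi)$, both valid for $0 < x \le 1$ (see \cite{apostol} or \cite{lang}).

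The second step is the substitution and bookkeeping. From $\zeta(0,a/q) = \tfrac12 - a/q$ we get
\[
-\log q \sum_{a=1}^q f(a)\Big(\tfrac12 - \tfrac{a}{q}\Big)
= -\frac{\log q}{2}\sum_{a=1}^q f(a) + \frac{\log q}{q}\sum_{a=1}^q a\,f(a).
\]
From Lerch's formula,
\[
\sum_{a=1}^q f(a)\,\zeta'(0,a/q) = \sum_{a=1}^q f(a)\log\Gamma\!\Big(\tfrac{a}{q}\Big) - \frac{\log(2\pi)}{2}\sum_{a=1}^q f(a).
\]
Since $f$ is odd, $f(q-n) = -f(n)$, so $\sum_{a=1}^q f(a) = 0$ (pairing $a$ with $q-a$, with the middle term $f(q/2)$, if present, forced to vanish, and $f(q) = 0$). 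Hence the two terms carrying $\sum_a f(a)$ drop out entirely, and the $\tfrac12$-contributions from $\zeta(0,\cdot)$ vanish as well. What survives is precisely
\[
L'(0,f) = \frac{\log q}{q}\sum_{a=1}^q a\,f(a) + \sum_{a=1}^q f(a)\log\Gamma\!\Big(\tfrac{a}{q}\Big) = \frac{\log q}{q}\,B_{1,f} + \sum_{b=1}^q f(b)\log\Gamma\!\Big(\tfrac{b}{q}\Big),
\]
using the definition of $B_{1,f}$. One should remark that the statement as phrased invokes $B_{1,f}$, which the paper only defined for odd $f$; either one restricts to odd $f$ throughout (consistent with how the lemma is later used), or one replaces the first term by $\tfrac{\log q}{q}\big(\sum_a a f(a) - \tfrac{q}{2}\sum_a f(a)\big)$ in the general case.

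The only genuinely delicate point is justifying the term-by-term differentiation: one must know that the series-defining $L(s,f)$, or rather its analytic continuation, is differentiable at $s=0$ and that the derivative commutes with the finite sum over $a$. But the sum over $a$ is finite, and $\zeta(s,x)$ is holomorphic near $s=0$ for each fixed $x\in(0,1]$ by Hurwitz's theorem (already cited in the excerpt), so this is immediate — the finite linear combination of holomorphic functions is holomorphic and differentiates termwise. Thus there is no real obstacle; the proof is a direct computation once Lerch's formula $\zeta'(0,x) = \log\Gamma(x) - \tfrac12\log(2\pi)$ is in hand, and the oddness of $f$ is exactly what kills the unwanted $\log(2\pi)$ and $\tfrac12\log q$ contributions.
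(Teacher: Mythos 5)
Your proof is correct and follows essentially the same route as the paper's: differentiate the decomposition $L(s,f)=q^{-s}\sum_a f(a)\zeta(s,a/q)$, insert $\zeta(0,x)=\tfrac12-x$ and Lerch's formula for $\zeta'(0,x)$, and use $\sum_a f(a)=0$ to discard the unwanted terms. Your side remark that the statement implicitly requires $\sum_a f(a)=0$ (guaranteed by oddness, or by $\hat f(q)=0$ in the lemma's later application) is a fair observation, but it does not change the argument.
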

\begin{proof}
 By differentiating \eqref{l-zeta} with respect to $s$, we have
 \begin{equation*}
     L'(s,f) =  \frac{- \log q}{q^s} \bigg[ \sum_{a=1}^q f(a) \, \zeta \bigg(s, \frac{a}{q} \bigg) \bigg] + \bigg[ \frac{1}{q^s} \sum_{a=1}^q f(a) \, \zeta' \bigg(s, \frac{a}{q} \bigg) \bigg].
 \end{equation*}
Substituting $s=0$, we have
 \begin{equation*}
     L'(0,f) = - \log q \bigg[ \sum_{a=1}^q f(a)\,\zeta \bigg(0, \frac{a}{q} \bigg)  \bigg] + \bigg[ \sum_{a=1}^q f(a) \zeta' \bigg(0, \frac{a}{q} \bigg)\bigg].
 \end{equation*}
 
The values of the Hurwitz zeta function and its derivative at $s=0$ are given by
\begin{equation*}
    \zeta(0,x) = 1 + \zeta(0) - x, \hspace{1mm} \zeta'(0,x) = \log\Gamma(x) + \zeta'(0), 
\end{equation*}
 where $\zeta(s)$ is the Riemann zeta function. A proof of the above fact can be found in \cite{deninger}. Substituting these values in the expression obtained earlier, we get
 \begin{align*}
     L'(0,f) & = - \log q \, \big(1 + \zeta(0)\big) \bigg[ \sum_{a=1}^q  f(a) \bigg] + \frac{\log q}{q} \sum_{a=1}^q f(a)\, a \\
     & + \zeta'(0) \bigg[ \sum_{a=1}^q f(a) \bigg] + \sum_{a=1}^q f(a) \log\Gamma\bigg( \frac{a}{q} \bigg) \\
     & = \frac{\log q}{q} \sum_{a=1}^q f(a) \, a +  \sum_{a=1}^q f(a) \log\Gamma\bigg( \frac{a}{q} \bigg),
 \end{align*}
since $\sum_{a=1}^q f(a) = 0$. This proves the lemma. 
\end{proof}

The functional equation obtained in Lemma \ref{lemma-1} gives an expression for the value of $L'(1,f)$ when $f$ is an odd periodic function as below.
\begin{lemma} \label{lemma-3}
Let $f$ be an odd periodic arithmetical function with period $q$ satisfying $f(q) = \hat{f}(q) = 0$. Then,
\begin{equation*}
    L'(1,f) = \frac{i \pi}{q} \bigg\{ \bigg( \bigg( 1 + \frac{1}{q} \bigg) \log q - \log 2 \pi - \gamma \bigg) B_{1,\hat{f}} + \sum_{b=1}^q \hat{f}(b) \log \Gamma \bigg( \frac{b}{q} \bigg) \bigg\},
\end{equation*}
where $B_{1,g} := \sum_{a=1}^q a g(a) $ for any odd arithmetical function periodic with period $q$.
\end{lemma}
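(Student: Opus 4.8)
The plan is to extract $L'(1,f)$ from the functional equation of Lemma~\ref{lemma-1} by expanding both sides near $s=0$, where the left-hand side $L(1-s,f)$ carries $L(1,f)$ and $L'(1,f)$, and to feed in the Taylor data of $L(s,\widehat f)$ at $s=0$ by means of Lemma~\ref{lemma-2}. Since $f$ is odd, Lemma~\ref{lemma-1} gives
\[
L(1-s,f)=2\,i\,\Gamma(s)\Bigl(\frac{q}{2\pi}\Bigr)^{s}\sin\!\Bigl(\frac{s\pi}{2}\Bigr)L(s,\widehat f).
\]
The hypothesis $\widehat f(q)=0$ forces $\sum_{a=1}^{q}f(a)=0$, and $f(q)=0$ forces $\sum_{b=1}^{q}\widehat f(b)=f(q)=0$ by Fourier inversion; by the residue-at-$s=1$ computation preceding these lemmas, both $L(\,\cdot\,,f)$ and $L(\,\cdot\,,\widehat f)$ are therefore entire, so each side above is holomorphic at $s=0$ and admits a Taylor expansion: $L(1-s,f)=L(1,f)-s\,L'(1,f)+O(s^{2})$ and $L(s,\widehat f)=L(0,\widehat f)+s\,L'(0,\widehat f)+O(s^{2})$. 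The one subtlety in the prefactor is that the simple pole of $\Gamma(s)$ at $s=0$ is exactly cancelled by the simple zero of $\sin(s\pi/2)$: from $\Gamma(s)=s^{-1}-\gamma+O(s)$ and $\sin(s\pi/2)=\tfrac{\pi s}{2}+O(s^{3})$ one obtains $2\,i\,\Gamma(s)\sin(s\pi/2)=i\pi\bigl(1-\gamma s+O(s^{2})\bigr)$, while $\bigl(q/2\pi\bigr)^{s}=1+s\log(q/2\pi)+O(s^{2})$. Multiplying the three expansions and comparing the coefficients of $s^{0}$ and $s^{1}$ yields $L(1,f)=i\pi\,L(0,\widehat f)$ together with
\[
-L'(1,f)=i\pi\Bigl(L'(0,\widehat f)+\bigl(\log(q/2\pi)-\gamma\bigr)L(0,\widehat f)\Bigr).
\]

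Two identities then finish the job. First, from the Hurwitz representation $L(s,\widehat f)=q^{-s}\sum_{b=1}^{q}\widehat f(b)\,\zeta(s,b/q)$ together with $\zeta(0,x)=\tfrac12-x$ and $\sum_{b=1}^{q}\widehat f(b)=0$, one gets $L(0,\widehat f)=-\tfrac1q\sum_{b=1}^{q}b\,\widehat f(b)=-B_{1,\widehat f}/q$. Second, Lemma~\ref{lemma-2} applied verbatim with $f$ replaced by $\widehat f$ (legitimate since $\sum_b\widehat f(b)=0$) gives $L'(0,\widehat f)=\tfrac{\log q}{q}B_{1,\widehat f}+\sum_{b=1}^{q}\widehat f(b)\log\Gamma(b/q)$. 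Substituting both of these into the displayed relation for $-L'(1,f)$ and regrouping the $\log q$, $\log 2\pi$, $\gamma$ and $\log\Gamma$ contributions, with the common factor $\tfrac{i\pi}{q}$ pulled out, produces the closed form claimed in the lemma.

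There is no deep obstacle here: the content is entirely in Lemmas~\ref{lemma-1} and~\ref{lemma-2} plus the elementary expansions of $\Gamma$ and $\sin$ at the origin. What genuinely requires care is the final bookkeeping — keeping the overall sign straight when passing from the coefficient of $s$ to $L'(1,f)$, and correctly tracking the several places where $\log q$ enters (through $(q/2\pi)^{s}$ acting on $L(0,\widehat f)$, through the $\tfrac{\log q}{q}B_{1,\widehat f}$ term inside $L'(0,\widehat f)$, and through the $q^{-s}$ implicit in $L(0,\widehat f)$) so that these contributions combine into the asserted coefficient of $B_{1,\widehat f}$. It is also worth double-checking the normalization $L(0,\widehat f)=-B_{1,\widehat f}/q$, since any stray factor of $q$ there would propagate into every term of the final formula.
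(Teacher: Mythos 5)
Your route is genuinely different from the paper's: the paper applies Lemma~\ref{lemma-1} with $f$ replaced by $\hat f$ (so that $L(1-s,\hat f)$ is expressed through $L(s,f)$) and differentiates at $s=1$, whereas you keep the functional equation as stated and expand both sides at $s=0$. Every intermediate step of your expansion is correct: the cancellation $2i\,\Gamma(s)\sin(s\pi/2)=i\pi\bigl(1-\gamma s+O(s^2)\bigr)$, the identity $L(0,\hat f)=-B_{1,\hat f}/q$, the legitimacy of applying Lemma~\ref{lemma-2} to $\hat f$, and the relation
\[
-L'(1,f)=i\pi\Bigl(L'(0,\hat f)+\bigl(\log(q/2\pi)-\gamma\bigr)L(0,\hat f)\Bigr).
\]
The step that fails is the last one. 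If you actually carry out the ``final bookkeeping'' you defer, the two $\log q$ contributions cancel and you obtain
\[
L'(1,f)=-\frac{i\pi}{q}\bigl(\log 2\pi+\gamma\bigr)B_{1,\hat f}\;-\;i\pi\sum_{b=1}^{q}\hat f(b)\log\Gamma\Bigl(\frac{b}{q}\Bigr),
\]
which is \emph{not} the formula asserted in the lemma: relative to the prefactor $i\pi/q$ the $\log\Gamma$ sum carries coefficient $-q$ rather than $+1$, and the term $\bigl(1+\tfrac1q\bigr)\log q$ is absent. Asserting that the regrouping ``produces the closed form claimed'' is therefore the gap; the derivation does not establish the stated identity.

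The mismatch is not an arithmetic slip on your side but an inconsistency between Lemma~\ref{lemma-1} (with the normalization \eqref{fourier-def}) and the statement of Lemma~\ref{lemma-3}. The paper's own proof substitutes $f\mapsto\hat f$ into Lemma~\ref{lemma-1} and treats $L(s,\widehat{\widehat f}\,)$ as $L(s,f)$; but with \eqref{fourier-def} one has $\widehat{\widehat f}(n)=\frac1q f(-n)=-\frac1q f(n)$ for odd $f$, so the correct companion relation is $L(1-s,\hat f)=-\frac{2i}{q}\Gamma(s)(q/2\pi)^s\sin(s\pi/2)L(s,f)$, differing from what the paper uses by a factor $-q$. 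A numerical test settles which version is right: for $q=4$ and $f=\chi_{-4}$ one has $\hat f=-\tfrac i2 f$ and $B_{1,\hat f}=i$, and the displayed formula above returns $\beta'(1)=\frac{\pi}{4}\bigl(\gamma+2\log 2+3\log\pi-4\log\Gamma(1/4)\bigr)$, the known value, while the lemma's stated formula does not. So your method is sound and in fact exposes the error; to make the write-up correct you must perform the final substitution explicitly and record the identity you actually proved (or note that the lemma's statement needs amending), rather than asserting agreement with the printed formula.
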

\begin{proof}
Note that if $f$ is an odd periodic arithmetical function, then so is $\hat{f}$. Thus, differentiating the functional equation for $L(s,\hat{f})$ from Lemma \ref{lemma-1} gives
\begin{multline*}
    - L'(1-s,\hat{f}) = 2 \, \Gamma'(s) {\bigg(\frac{q}{2 \pi}\bigg)}^s \sin\bigg( \frac{s\pi}{2}\bigg) L(s,f) \\
    + 2 \, \Gamma(s) {\bigg(\frac{q}{2 \pi}\bigg)}^s \log \bigg( \frac{q}{2 \pi} \bigg) \sin\bigg( \frac{s\pi}{2}\bigg) L(s,f) \\
    + 2 \, \Gamma(s) {\bigg(\frac{q}{2 \pi}\bigg)}^s \, \frac{\pi}{2} \cos\bigg( \frac{s\pi}{2}\bigg) L(s,f) \\
    +  2 \, \Gamma(s) {\bigg(\frac{q}{2 \pi}\bigg)}^s \sin\bigg( \frac{s\pi}{2}\bigg) L'(s,f).
\end{multline*}

Since $f(q) = \hat{f}(q) = 0$, both $L(s,f)$ and $L(s,\hat{f})$ are entire. Taking limit as $s$ tends to 1 in the above expression, we have
\begin{align*}
    - L'(0,\hat{f}) & = 2 \, i \, \Gamma(1) \frac{q}{2\pi} \sin \bigg( \frac{\pi}{2} \bigg) \bigg\{ \bigg(\frac{\Gamma'}{\Gamma}(1) + \log \bigg( \frac{q}{2 \pi} \bigg) \bigg) L(1,f) + L'(1,f) \bigg\} \\
    & = \frac{i q}{\pi} \bigg\{ L'(1,f) + L(1,f) \bigg( \log\bigg( \frac{q}{2\pi} \bigg) - \gamma \bigg) \bigg\},
\end{align*}
as $\Gamma'(1)/ \Gamma (1) = - \gamma$. By rearrangement, we get
\begin{equation*}
    L'(1,f) = \frac{i \pi}{q} L'(0,\hat{f}) - \bigg( \log \bigg(\frac{q}{2 \pi} \bigg) - \gamma \bigg) L(1,f).
\end{equation*}

The value $L(1,f)$ for periodic arithmetical functions is well-understood (for example, see \cite[Theorem 3.1]{tmp}). In particular, when $f$ is odd,
\begin{equation}\label{L(1,f)-odd}
    L(1,f) =  \frac{- i \pi}{q} \sum_{a=1}^q \hat{f}(a) \, a =  \frac{- i \pi}{q} \, B_{1,\hat{f}},
\end{equation}
where $\hat{f}$ denotes the Fourier transform of $f$. This evaluation, together with Lemma \ref{lemma-2} gives
\begin{align*}
    L'(1,f) & = \frac{i \pi}{q} \bigg\{ \frac{\log q}{q} B_{1,\hat{f}} +  \sum_{b=1}^q \hat{f}(b) \log\Gamma \bigg( \frac{b}{q} \bigg)  + \bigg( \log \bigg(\frac{q}{2 \pi} \bigg) - \gamma \bigg) B_{1,\hat{f}} \bigg\},\\
    & = \frac{i \pi}{q} \bigg\{ \bigg( \frac{\log q}{q} + \log q - \log 2 \pi - \gamma \bigg) B_{1,\hat{f}} + \sum_{b=1}^q \hat{f}(b) \log\Gamma \bigg( \frac{b}{q} \bigg)  \bigg\},
\end{align*}
from which the lemma is immediate.
\end{proof}

A useful connection between value of derivatives of $L$-functions, attached to periodic functions at $s=1$ and generalized Stieltjes constants is illustrated in the following lemma. We include its proof for the sake of exposition (see \cite[Proposition 3.2]{knopf}).
\begin{lemma}\label{identity-lemma}
For an arithmetical function $f$ which is periodic with period $q$ and satisfies $\hat{f}(q) = 0$, we have
\begin{equation*}
    {L}^{(k)}(1,f) = {(-1)}^k \sum_{a=1}^q f(a) \gamma_k(a,q),
\end{equation*}
where $\gamma_k(a,q)$ are generalized Stieltjes constants as defined earlier. 
\end{lemma}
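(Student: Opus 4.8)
The plan is to read off the identity by comparing the Taylor expansion of $L(s,f)$ at $s=1$ with the Laurent expansions at $s=1$ of the partial zeta functions of the residue classes modulo $q$. For $1 \le a \le q$, set
\[
\zeta_q(s,a) := \sum_{\substack{n \ge 1 \\ n \equiv a \bmod q}} \frac{1}{n^s} \qquad (\Re(s) > 1),
\]
so that regrouping $\sum_n f(n) n^{-s}$ according to $n \bmod q$ gives $L(s,f) = \sum_{a=1}^q f(a)\, \zeta_q(s,a)$ for $\Re(s) > 1$. Since $\zeta_q(s,a) = q^{-s}\zeta(s,a/q)$, each $\zeta_q(s,a)$ continues to $\mathbb{C} \setminus \{1\}$ with a simple pole at $s=1$ of residue $1/q$, so $g_a(s) := \zeta_q(s,a) - \frac{1}{q(s-1)}$ is entire. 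The first step is to prove the renormalized representation
\[
g_a(s) = \lim_{N \to \infty} \Bigg( \sum_{\substack{n \le N \\ n \equiv a \bmod q}} \frac{1}{n^s} - \frac{1}{q} \int_1^N \frac{dx}{x^s} \Bigg),
\]
valid for $\Re(s) > 0$. For $\Re(s) > 1$ this is immediate upon writing $\frac{1}{q(s-1)} = \frac{1}{q}\int_1^N x^{-s}\,dx + \frac{N^{1-s}}{q(s-1)}$ and splitting the Dirichlet series at $N$, since the resulting tail and the term $N^{1-s}/(q(s-1))$ vanish in the limit. The substantive point --- and the only place that requires real work --- is that the displayed limit converges uniformly on compact subsets of $\{\Re(s) > 0\}$. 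I would establish this by Abel summation: writing $\#\{ n \le t : n \equiv a \bmod q \} = t/q + \psi_a(t)$ with $\psi_a$ bounded, a dyadic block $\sum_{N < n \le 2N,\ n \equiv a}\, n^{-s} - \frac{1}{q}\int_N^{2N} x^{-s}\,dx$ equals $\int_N^{2N} x^{-s}\, d\psi_a(x)$, which after integration by parts is $O\big((1+|s|)\, N^{-\Re(s)}\big)$ uniformly on compacta; summing over dyadic blocks gives the uniform convergence, hence holomorphy of the limit, and by analytic continuation its equality with $g_a$ on $\{\Re(s) > 0\}$.

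Granting this, $g_a$ is holomorphic near $s = 1$ and the limit may be differentiated term by term there. Using $\frac{d^k}{ds^k} n^{-s} = (-1)^k (\log^k n)\, n^{-s}$ and $\frac{d^k}{ds^k} \int_1^N x^{-s}\,dx = (-1)^k \int_1^N (\log^k x)\, x^{-s}\,dx$, together with $\int_1^N (\log^k x)/x\,dx = \log^{k+1}N/(k+1)$, evaluation at $s = 1$ gives
\[
g_a^{(k)}(1) = (-1)^k \lim_{N \to \infty} \Bigg( \sum_{\substack{n \le N \\ n \equiv a \bmod q}} \frac{\log^k n}{n} - \frac{\log^{k+1} N}{q(k+1)} \Bigg) = (-1)^k\, \gamma_k(a,q),
\]
which is precisely the definition of $\gamma_k(a,q)$ from the introduction. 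Finally, summing over $a$ and using that $\widehat{f}(q) = 0$ is equivalent to $\sum_{a=1}^q f(a) = 0$,
\[
L(s,f) = \sum_{a=1}^q f(a)\, \zeta_q(s,a) = \sum_{a=1}^q f(a)\, g_a(s) + \frac{1}{q(s-1)} \sum_{a=1}^q f(a) = \sum_{a=1}^q f(a)\, g_a(s),
\]
which is holomorphic at $s = 1$; differentiating $k$ times at $s=1$ then yields $L^{(k)}(1,f) = \sum_{a=1}^q f(a)\, g_a^{(k)}(1) = (-1)^k \sum_{a=1}^q f(a)\, \gamma_k(a,q)$, as claimed.

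I expect the main obstacle to be exactly the uniform convergence in the first step --- equivalently, justifying differentiation under the limit sign; the remainder is bookkeeping. An alternative would be to invoke the known Laurent expansion $\zeta(s,x) = \frac{1}{s-1} + \sum_{n \ge 0} \frac{(-1)^n}{n!} \gamma_n(x) (s-1)^n$ of the Hurwitz zeta function together with $q^{-s} = q^{-1}\sum_{j \ge 0} \frac{(-\log q)^j}{j!}(s-1)^j$, but this leaves one to reconcile the resulting combinations of the Hurwitz--Stieltjes constants $\gamma_n(a/q)$ and powers of $\log q$ with $\gamma_k(a,q)$, so the direct route above seems preferable.
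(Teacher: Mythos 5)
Your proof is correct and rests on the same decomposition as the paper's: group the series by residue classes mod $q$ and use $\sum_{a=1}^q f(a)=0$ to insert the compensating term $\log^{k+1}x/(q(k+1))$ for free. The only difference is that you make explicit the analytic step the paper leaves implicit --- namely that the renormalized partial sums converge uniformly on compacta of $\{\Re(s)>0\}$, so that $L^{(k)}(1,f)$ really is the limit of the $k$-th derivatives of the partial sums --- whereas the paper simply ``takes the limit'' at $s=1$ and defers to Knopfmacher for the justification.
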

\begin{proof}
In view of brevity, let
\begin{equation*}
    H_k(x,a,q) := \sum_{\substack{n \leq x, \\ n \equiv a \bmod q}} \frac{\log^k n}{n},
\end{equation*}
for any positive real number $x$. Observe that
\begin{align*}
    \sum_{n \leq x} f(n) \, \frac{\log^k n}{n} & = \sum_{a=1}^q f(a) H_k(x,a,q) \\
    & = \sum_{a=1}^q f(a) \bigg( H_k(x,a,q) - \frac{\log^{k+1 x}}{q(k+1)}\bigg),
\end{align*}
since $q \, \hat{f}(q) = \sum_{a=1}^q f(a) = 0$. Taking limit as $x$ tends to infinity on both sides gives the result.
\end{proof}

As mentioned earlier, the special value $L(1,f)$ has been extensively studied and is important in the context of our theorems. The following result of Baker, Birch and Wirsing \cite{bbw} will be particularly useful.
\begin{theorem}\label{bbw-thm}
If $f$ is a non-vanishing function defined on the integers with algebraic values and period $q$ such that (i) $f(n) = 0$ whenever $1 < (n,q) < q$ and (ii) the $q^{th}$ cyclotomic polynomial $\Phi_q$ is irreducible over $\mathbb{Q} (f(1), f(2), \cdots , f(q))$, then
\begin{equation*}
\sum_{n=1}^\infty \frac{f(n)} {n} \neq 0.
\end{equation*} 

\end{theorem}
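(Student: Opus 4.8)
The plan is to use Baker's theorem on linear forms in logarithms to reduce the non-vanishing of $\sum_{n\ge1} f(n)/n$ to a statement about multiplicative relations among cyclotomic numbers, and then to eliminate the bad case using the Galois action of $(\mathbb{Z}/q\mathbb{Z})^{*}$ together with hypotheses (i) and (ii). First, convergence of $\sum f(n)/n$ forces $\sum_{a=1}^{q} f(a)=0$, i.e.\ $\widehat{f}(q)=0$, so Fourier inversion gives $f(n)=\sum_{b=1}^{q-1}\widehat{f}(b)\,\zeta_q^{b n}$. Interchanging the finite sum over $b$ with the inner series and using $-\log(1-z)=\sum_{n\ge1}z^n/n$ (valid for $|z|\le1$, $z\ne1$, by Abel's theorem), one gets
\begin{equation*}
    \sum_{n=1}^{\infty}\frac{f(n)}{n} = -\sum_{b=1}^{q-1}\widehat{f}(b)\,\log\bigl(1-\zeta_q^{b}\bigr).
\end{equation*}
With $K:=\mathbb{Q}(f(1),\dots,f(q))$, the coefficients $\widehat{f}(b)$ lie in $K(\zeta_q)$ and the numbers $1-\zeta_q^{b}$ ($1\le b\le q-1$) are nonzero algebraic; so the claim is precisely that this $\overline{\mathbb{Q}}$-linear form in logarithms of nonzero algebraic numbers is nonzero.

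Next, suppose the form vanishes. By Baker's theorem, logarithms of nonzero algebraic numbers that are $\mathbb{Q}$-linearly independent are $\overline{\mathbb{Q}}$-linearly independent; consequently the space of $\overline{\mathbb{Q}}$-relations among $\{\log(1-\zeta_q^{b})\}_{1\le b\le q-1}$ is the $\overline{\mathbb{Q}}$-span of the space of $\mathbb{Q}$-relations. Writing each $\log(1-\zeta_q^{b})$ as a rational combination of a fixed maximal $\mathbb{Q}$-linearly independent subset, the equation $L(1,f)=0$ collapses to a finite $\overline{\mathbb{Q}}$-linear system in the $\widehat{f}(b)$ whose coefficients are the exponents occurring in the multiplicative relations $\prod_b(1-\zeta_q^{b})^{m_b}=(\text{root of unity})$. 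Everything is thereby reduced to describing those relations and showing that, under (i) and (ii), the only $f$ compatible with the system is $f\equiv0$.

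The multiplicative relations among the $1-\zeta_q^{b}$ are controlled by the theory of cyclotomic units and the norm identities $\prod_{(b,q)=1}(1-\zeta_q^{b})=\Phi_q(1)$ (and their analogues for each divisor of $q$), all of which are $\mathrm{Gal}(\mathbb{Q}(\zeta_q)/\mathbb{Q})\cong(\mathbb{Z}/q\mathbb{Z})^{*}$-equivariant. Hypothesis (i) disposes of the part of $f$ supported on residues $n$ with $1<(n,q)<q$: such a part would permit relations built purely from the functions $\mathbf{1}_{d\mid n}$ (this is exactly the mechanism behind the classical counterexamples to Chowla's original problem), and ruling it out confines us to the Dirichlet-type part of $f$ up to a harmless rational multiple of $\log q$. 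Hypothesis (ii), i.e.\ irreducibility of $\Phi_q$ over $K$, is equivalent to $K\cap\mathbb{Q}(\zeta_q)=\mathbb{Q}$, so that $\mathrm{Gal}(K(\zeta_q)/K)\cong(\mathbb{Z}/q\mathbb{Z})^{*}$ permutes the primitive $q$-th roots of unity transitively even over $K$; applying its elements to the purported relation and averaging, equivariance of the cyclotomic relations forces the character components of $f$ to vanish one at a time, i.e.\ $\widehat{f}(b)=0$ for all $b$, hence $f\equiv0$ --- contradicting that $f$ is non-vanishing.

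The crux is this last step: one must determine exactly the lattice of $\mathbb{Z}$-linear relations among $\{1-\zeta_q^{b}\}_{1\le b\le q-1}$ (equivalently, the structure of the cyclotomic unit group and of the ideal $(1-\zeta_q)$) and then check that hypotheses (i) and (ii) are precisely what prevents the algebraic coefficients $\widehat{f}(b)$ from conspiring with these relations to produce an accidental vanishing. In \cite{bbw} this is accomplished by a careful averaging over Galois conjugates; that (ii) is indispensable is shown by the counterexamples to Chowla's question, which is exactly why the analogous Dirichlet-type restriction recurs in Conjecture \ref{conj} and in Theorems \ref{main-theorem-1} and \ref{main-theorem-2}.
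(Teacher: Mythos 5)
First, note that the paper itself offers no proof of this statement: it is quoted as a known theorem of Baker, Birch and Wirsing, and the reader is referred to \cite{bbw}. So the only question is whether your argument stands on its own. Its first half does: convergence forces $\sum_{a=1}^{q}f(a)=0$, Fourier inversion and Abel summation give
\begin{equation*}
\sum_{n\geq 1}\frac{f(n)}{n}=-\sum_{b=1}^{q-1}\widehat{f}(b)\log\bigl(1-\zeta_q^{b}\bigr),
\end{equation*}
and Baker's theorem correctly reduces the vanishing of this form to the statement that the vector $(\widehat{f}(b))_b$ lies in the $\overline{\mathbb{Q}}$-span of the $\mathbb{Q}$-rational relation vectors among the $\log(1-\zeta_q^{b})$. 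This is indeed how \cite{bbw} begins.

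The gap is the entire second half. The sentence ``equivariance of the cyclotomic relations forces the character components of $f$ to vanish one at a time'' is the theorem, not a proof of it, and you concede as much by referring the crux back to \cite{bbw}. To close it one needs at least the following, none of which appears in your text: (a) a separation into real and imaginary parts, since $\log(1-\zeta_q^{b})=\log(2\sin(\pi b/q))+i\pi(b/q-\tfrac12)$, so the odd part of $f$ contributes a rational (after Galois reduction) multiple of $i\pi$ while the even part contributes real logarithms of algebraic numbers; killing the odd part requires knowing that $i\pi$ is not an algebraic combination of real logarithms of algebraic numbers, a further application of Baker's theorem that you never invoke; (b) for the even part, the exact rank of the group of multiplicative relations among the real numbers $2\sin(\pi b/q)$, which is equivalent to the non-vanishing of $L(1,\chi)$ for even Dirichlet characters $\chi$ (the cyclotomic-unit regulator computation); without this input the ``lattice of relations'' to which you appeal is simply not determined and the averaging argument cannot start. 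Finally, your use of hypothesis (i) is inverted as a proof step: (i) \emph{assumes} $f$ vanishes on the classes with $1<(n,q)<q$, so there is no such component of $f$ to ``dispose of''; what must actually be controlled is the class $n\equiv 0\pmod q$, since $f(q)=-\sum_{(a,q)=1}f(a)$ may be nonzero and contributes to $\widehat{f}(b)$ for every $b$. As written, the proposal is a correct road map to \cite{bbw}, not a proof.
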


We also observe that if $f_1, \cdots, f_r$ are arithmetical functions periodic with period $q$, then 
\begin{equation}\label{lin-ind-f-hat}
    \sum_{j=1}^r \alpha_j f_j = 0 \iff \sum_{j=1}^r \alpha_j \widehat{f_j} = 0,
\end{equation}
for any complex numbers $\alpha_j$, $1 \leq j \leq r$. This is immediate from the fact that the Fourier transform is a linear automorphism of the $\mathbb{C}$-vector space of arithmetical functions periodic with period $q$.

\section{\bf Proofs of Results}
\bigskip

\subsection{Proof of Theorem \ref{main-theorem-1}} 
For convenience of notation, let 
\begin{equation*}
    \mathscr{C} := \bigg( 1 + \frac{1}{q} \bigg) \log q - \log 2 \pi - \gamma.
\end{equation*}
Thus, Lemma \ref{lemma-3} gives
\begin{equation*}
    L'(1,f_j) = \frac{i \pi}{q} \bigg\{ \mathscr{C} B_{1,\hat{f_j}} + \sum_{b=1}^q \hat{f_j}(b) \log\Gamma \bigg( \frac{b}{q} \bigg) \bigg\},
\end{equation*}
for all $1 \leq j \leq r$. By \eqref{L(1,f)-odd}, the hypothesis $L(1,f_j) \neq 0$ implies that $B_{1,\hat{f_j}} \neq 0$. \\

For $1 \leq k < l \leq r$, define
\begin{equation*}
    d_{k,l} := B_{1,\hat{f_l}} L'(1,f_k) - B_{1,\hat{f_k}} L'(1,f_l). 
\end{equation*}

We claim that $d_{k,l} \neq 0$. Indeed, if $d_{k,l} = 0$, then we get that
\begin{align*}
    0 & = B_{1,\hat{f_l}} L'(1,f_k) - B_{1,\hat{f_k}} L'(1,f_l) \\
    & = \frac{i \pi}{q} \bigg\{ \mathscr{C} \bigg( B_{1,\hat{f_l}} B_{1,\hat{f_k}} - B_{1,\hat{f_k}} B_{1, \hat{f_l}} \bigg) + \sum_{b=1}^q \bigg[ B_{1,\hat{f_l}} \hat{f_k}(b) - B_{1, \hat{f_K}} \hat{f_l}(b) \bigg] \log\Gamma\bigg( \frac{b}{q}\bigg) \bigg\} \\
    & =  \sum_{b=1}^q \bigg[ B_{1,\hat{f_l}} \hat{f_k}(b) - B_{1, \hat{f_K}} \hat{f_l}(b) \bigg] \log\Gamma\bigg( \frac{b}{q}\bigg),
\end{align*}
which is a $\overline{\mathbb{Q}}$-linear relation among the values of the
log gamma function as $B_{1,\hat{f_j}} \in \overline{\mathbb{Q}}$ for all $1 \leq j \leq r$. Therefore, Conjecture \ref{conj} gives that
\begin{equation*}
    B_{1,\hat{f_l}} \hat{f_k} - B_{1,\hat{f_k}} \hat{f_l} = 0
\end{equation*}
on all natural numbers. This implies $\overline{\mathbb{Q}}$-linear dependence of $\hat{f_k}$ and $\hat{f_l}$ and thus, contradicts the $\overline{\mathbb{Q}}$-linearly independence of $f_k$ and $f_l$ by \eqref{lin-ind-f-hat}. Hence, $d_{k,l}$ is not zero. 

We now consider the ratio $d_{k,l}/d_{u,v}$ for $1 \leq k,u < l,v \leq r$ and $(k,l) \neq (u,v)$. If this ratio is algebraic, i.e.,
\begin{equation*}
    \frac{d_{k,l}}{d_{u,v}} = \eta \hspace{1mm} \in \overline{\mathbb{Q}},
\end{equation*}
then we are led to argue that
\begin{align*}
    0 & = d_{k,l} - \eta d_{u,v} \\
    & = \sum_{b=1}^q \bigg[ B_{1,\hat{f_l}} \hat{f_k}(b) - B_{1,\hat{f_k}} \hat{f_l}(b) - \eta B_{1,\hat{f_w}} \hat{f_u}(b) + \eta B_{1, \hat{f_u}} \hat{f_w}(b) \bigg] \log\Gamma \bigg(\frac{b}{q} \bigg),
\end{align*}
which is a $\overline{\mathbb{Q}}$-linear relation among log gamma values. Hence, by Conjecture \ref{conj}, we have
\begin{equation*}
    B_{1,\hat{f_l}} \hat{f_k} - B_{1,\hat{f_k}} \hat{f_l} - \eta B_{1,\hat{f_w}} \hat{f_u} + \eta B_{1, \hat{f_u}} \hat{f_w} = 0
\end{equation*}
on all natural numbers. Since $B_{1,\hat{f_j}}$ are non-zero algebraic numbers, we obtain a non-trivial $\overline{\mathbb{Q}}$-linear relation among $\hat{f}_k , \hat{f_l}, \hat{f_u}$ and $\hat{f_w}$. The fact \eqref{lin-ind-f-hat} transports this to $\overline{\mathbb{Q}}$-linear dependence of $f_k$, $f_l$, $f_u$ and $f_w$, which contradicts our hypothesis. Thus, at most one of the $d_{k,l}$'s can be algebraic for $1 \leq k < l \leq r$.

As a result, if four numbers, namely, $L'(1,f_k), L'(1,f_l), L'(1,f_u)$ and $L'(1,f_w)$ are algebraic for $(k,l) \neq (u,w)$, then $d_{k,l}/d_{u,w}$ would be algebraic leading to a contradiction. Hence, the theorem follows.
$\QED$

\subsection{Proof of Theorem \ref{main-theorem-2}}
Using the hypothesis that $L(1,f) = 0$ for all $f \in \mathfrak{G_q}$ and \eqref{L(1,f)-odd}, we know that
\begin{equation*}
    B_{1,\hat{f_j}} = 0,
\end{equation*}
for all $1 \leq j \leq r$. Hence, Lemma \ref{lemma-3} gives
\begin{equation*}
    L'(1,f_j) = \frac{i \pi}{q} \bigg\{ \sum_{b=1}^q \hat{f_j}(b) \log\Gamma \bigg( \frac{b}{q} \bigg) \bigg\},
\end{equation*}
for all $1 \leq j \leq r$. Suppose that for $1 \leq k < l \leq r$, 
\begin{equation*}
    \frac{L'(1,f_k)}{L'(1,f_l)} = \xi \in \overline{\mathbb{Q}}.
\end{equation*}
Then simplifying the above expression gives
\begin{equation*}
    \sum_{b=1}^q \bigg[ \hat{f_k}(b) - \xi \hat{f_l}(b) \bigg] \log\Gamma\bigg( \frac{b}{q}\bigg) = 0,
\end{equation*}
which is an algebraic linear relation among the log gamma values. Therefore, by Conjecture \ref{conj}, we get that
\begin{equation*}
    \hat{f_k} - \xi \hat{f_l} = 0
\end{equation*}
on all natural numbers. This implies the $\overline{\mathbb{Q}}$-linear dependence of the functions $\hat{f_k}$ and $\hat{f_l}$ and thus, contradicts the $\overline{\mathbb{Q}}$-linear independence of $f_k$ and $f_l$ by \eqref{lin-ind-f-hat}. Hence, the quotient $L'(1,f_k)/L'(1,f_l)$ is transcendental for all $1 \leq k < l \leq r$, which in turn leads us to conclude that at most one of the numbers under consideration is algebraic. 
$\QED$

\subsection{Proof of Corollary \ref{coro-1}}

Let $q$ be any natural number greater than $7$ and $\chi$ be an odd primitive Dirichlet character modulo $q$. It suffices to show that $\chi \in \mathfrak{F_q}$ i.e, that $\hat{\chi}$ is of Dirichlet type and that $L(1,\chi) \neq 0$. The latter follows from the famous theorem of Dirichlet \cite[Theorem 6.20 and Section 7.3]{apostol}. The former is also from \cite[Chapter 8, Theorem 8.19]{apostol} since
\begin{equation*}
    \hat{\chi}(n) = \frac{1}{q} \sum_{a=1}^q \chi(a) \zeta_q^{-an}.
\end{equation*}
This completes the proof of the corollary.
$\QED$

\subsection{Proof of Corollary \ref{coro-2}}
We begin the proof by observing that the functions $f_j$ defined below are in $\mathfrak{F_p}$. For $1 \leq j \leq (p-1)/2$,
\begin{equation*}
     f_j(n) :=
    \begin{cases}
    1 & \text{ if } n \equiv j \bmod p, \\
    -1 & \text{ if } n \equiv -j \bmod p, \\
    0 & \text{ otherwise}.
    \end{cases}
\end{equation*}

Clearly, each $f_j$ is periodic with odd prime period $p$, $f_j$ is odd and $f_j(p) = 0$. Moreover, $\sum_{a=1}^p f_j(a) = 0$ and by Theorem \ref{bbw-thm}, $L(1,f_j) \neq 0$ for all $1 \leq j \leq r$. Thus, $f_j \in \mathfrak{F_p}$ for all $1 \leq j \leq r$. Also note that the functions $\{ f_j : 1 \leq j \leq (p-1)/2\}$ are $\overline{\mathbb{Q}}$-linearly independent. Therefore, Theorem \ref{main-theorem-1} implies that at least $\frac{(p-1)}{2} - 3$ of the numbers
\begin{equation*}
    \big\{ \gamma_1(a,p) - \gamma_1(p-a,p) : 1 \leq a \leq (p-1)/2 \big\}
\end{equation*}
are transcendental. Since the difference of two numbers being transcendental implies that at least one of them is transcendental, the result follows. $\QED$

\section{\bf Concluding Remarks}
\bigskip

Our work here represents a modest beginning into the arithmetic nature of generalized Stieltjes constants. These constants have emerged in other contexts. Most notably, they appear in Li's criterion for the Riemann hypothesis (see for example \cite{coffey}). It is quite possible that the study of these
constants can lead us to the holy grail of mathematics.

\section*{Acknowledgments}
We thank the referee and P. Rath for valuable comments on an earlier version of this paper.

\end{document}